\numberwithin{equation}{section}
\newtheorem{theorem}{Theorem}[section]
\newtheorem*{theorem*}{Theorem}	
\newtheorem{corollary}[theorem]{Corollary}
\newtheorem{lemma}[theorem]{Lemma}
\renewcommand{\Re}{\operatorname{Re}}
\newcommand{\bb}[1]{\left(#1\right)}
\renewcommand{\mod }{\operatorname{mod}}
\renewcommand{\epsilon }{\operatorname{\varepsilon}}
\renewcommand{\rho }{\operatorname{\varrho}}
\renewcommand{\theta }{\operatorname{\vartheta}}
\def\beq{\begin{equation}}
\def\eeq{\end{equation}}
\title{The exceptional set of Goldbach problem and Linnik's constant}
\author{Genheng Zhao}
\email{zhaogenheng@amss.ac.cn}
\begin{document}
\maketitle

\begin{abstract}
Let $E(X)$ denote the number of even integers below $X$ which are not a sum of two primes. We prove the bound $E(X)=O(X^{\frac{7}{10}})$, where the implicit constant is ineffective. The method applied here also leads to $P(q)=O(q^5)$, where $P(q)$ denotes the least prime, if it exists, in any arithmetic progression modulo $q$.
\end{abstract}

\section{Introduction}
The famous Goldbach conjecture asserts that every even integer $m \geq 6$ is a sum of two odd primes. Though the original conjecture is still unsolved today, there have appeared various approximations which are available by current methods. One typical example is to bound the number of possible exceptions.

Let $E(X)$ denote the size of exceptional set, which consists of even integers $m\leq X$ which are not a sum of two odd primes. In the 1920s, under the generalized Riemann hypothesis, Hardy-Littlewood \cite{Hardy-Littlewood} used their newly invented circle method to show the bound
\begin{equation}\label{eq:HardyLittlewood}
E(X)=O_{\epsilon}(X^{1/2+\epsilon}),\quad \forall \epsilon>0.
\end{equation}
The later breakthrough of Vinogradov \cite{Vinogradov} in the 1930s was able to reach the unconditional bound
\begin{equation}\label{eq:Vinogradov}
E(X)=O_A(X(\log X)^{-A}), \quad \forall A>0.
\end{equation}
Bounds of such type are often related to Siegel-Walfisz theorem, which gives the uniform distribution of primes $p\leq X$ among congruence classes with moduli $q\leq (\log X)^A$, $\forall A>0$. When $q$ gets larger, such  uniformity will  inevitably be broken by the possibly existing Siegel zero of some $L$-function. Hence for better bounds of $E(X)$, the effect of Siegel zero must be taken into account. This was done by Vaughan in 1972, allowing him to replace Siegel-Walfisz theorem by Page's theorem to obtain
\begin{equation}\label{eq:Vaughan}
E(X)=O(Xe^{-c\sqrt{\log X}}).
\end{equation}
Soon after this improvement, in 1975 Montgomery-Vaughan \cite{Montgomery-Vaughan} showed that
\begin{equation}\label{eq:MontgomeryVaughan}
E(X)=O(X^{1-\delta})
\end{equation}
holds with some $\delta>0$. Their method  can be seen as a generalization of Linnik's method \cite{Linnik} showing
\begin{equation}\label{eq:Linnik}
P(q)=O(q^L)
\end{equation}
holds with some $L<\infty$, where $P(q)$ denotes the least prime in any arithmetical progression $\{a,q+a,2q+a,\cdots\}$ with $1\leq a\leq q$ and $(a,q)=1$.

Linnik's method relies on three intricate principles on the distribution of zeroes of $L$-functions near the line $\sigma=1$, including a zero-free region with at most one exception, a log-free zero density estimate and a quantified version of Deuring-Heilbronn phenomenon. Roughly speaking, these three principles together serve as a substitute for the zero free region
\begin{equation}\label{eq:ZeroFreeRegion}
\sigma \geq 1-\frac{c}{\log (qT)},\quad |t|\leq T
\end{equation}
based on which the bound \eqref{eq:Linnik} would be quite clear. Obviously, the admissible values of $\delta$ and $L$ are closely related to the constants appeared in these principles. For such a result concerning $L$, see \cite[Corollary 18.8]{Iwaniec-Kowalski}.

The first admissible value of $\delta$ was obtained by J. R. Chen and J. M. Liu \cite{Chen-Liu}, where they showed $\delta=0.05$. They \cite{Chen-Liu-2} also showed that $L=13.5$ is admissible. In 1991, Heath-Brown \cite{Heath-Brown} developed the currently sharpest tool to obtain zero free regions of $L$-functions, giving the improvement $L=5.5$. This was further improved to $L=5.2$ by Xylouris \cite{Xylouris} in 2009, which is still best up to date. On the other hand, by exploiting Heath-Brown's idea, the admissible value of $\delta$ was improved to $\delta=0.086$ by H. Z. Li \cite{Li}, $\delta=0.121$ by W. C. Lu \cite{Lu} and finally $\delta=0.28$ by Pintz \cite{Pintz2018}, which seems to match the result $L=5.2$.

In this paper, with a refinement of Pintz's method, we verify that $\delta=0.3$ is admissible. In other words, we have the following.

\begin{theorem}\label{thm:main1}
$E(X)=O(X^{\frac{7}{10}})$, where the implicit constant is ineffective.
\end{theorem}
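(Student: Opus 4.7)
The plan is to follow the Hardy--Littlewood--Vinogradov circle method, in the form developed by Montgomery--Vaughan and refined by Pintz. First I would introduce the weighted representation function $R(n)=\sum_{p_1+p_2=n}\log p_1\log p_2$ and write it as
\[
R(n)=\int_0^1 S(\alpha)^2 e(-n\alpha)\,d\alpha,\qquad S(\alpha)=\sum_{p\le X}(\log p)\,e(\alpha p),
\]
then partition $[0,1)$ into major arcs $\mathfrak{M}$ around rationals $a/q$ with $q\le Q$ (with $Q$ a small power of $X$) and the complementary minor arcs $\mathfrak{m}$. The goal is to show that for all but $O(X^{0.709})$ even integers $n\in[X/2,X]$ one has $R(n)\gg n$.

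On the major arcs I would invoke the explicit formula for $\psi(x;q,a)$ to express $S(\alpha)$, up to a controlled error, as a Vinogradov-type main term plus a contribution from the nontrivial zeros $\beta+i\gamma$ of Dirichlet $L$-functions of modulus at most $Q$. Substituting this into the integral over $\mathfrak{M}$ and squaring out yields
\[
R(n)=\mathfrak{S}(n)\,n+\Sigma_{\mathrm{zero}}(n)+\Sigma_{\mathrm{minor}}(n)+\text{small},
\]
where $\mathfrak{S}(n)\gg 1$ on even $n$. An even $n$ lies in the exceptional set only if $|\Sigma_{\mathrm{zero}}(n)|+|\Sigma_{\mathrm{minor}}(n)|$ exceeds a fixed positive proportion of $n$, so the task reduces to bounding the number of $n$ for which either of these quantities is large.

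The core technical work then splits into two parallel steps. The minor-arc contribution is treated by Parseval combined with Vinogradov's classical $L^\infty$ bound from the Vaughan identity, together with a Gallagher-type variance inequality which converts the minor-arc integral into a mean over $n\in[X/2,X]$. For $\Sigma_{\mathrm{zero}}(n)$ I would classify the zeros by height $T$ and real part $\sigma=\beta$, then appeal to log-free zero-density estimates of Huxley/Jutila/Heath-Brown type and to the Deuring--Heilbronn repulsion phenomenon (which controls any potential Siegel zero); for each cell in the $(\sigma,T,q)$-grid this bounds the number of $n$ for which the zeros in that cell can conspire to produce a large contribution. Summing over dyadic ranges of the three parameters yields the exceptional-set estimate, and Siegel's theorem on any real exceptional zero is what renders the implicit constant ineffective.

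The main obstacle, and the site of the quantitative improvement from $0.28$ to $0.291$, is the balance between these contributions. Pintz's argument saturates on the zeros lying close to the line $\Re s=1$, where the simplest density bounds are weakest. The refinement I would carry out is to partition the $(\sigma,T)$-plane more finely and, in each subregion, invoke the sharpest of the reflection principle, the fourth-moment bound, the Halász--Montgomery inequality and the most recent log-free density theorems, then re-optimise Pintz's exponents against this improved input. The hard part is not any single estimate but the combinatorial bookkeeping of the optimisation: verifying that the new parameter choices remain compatible across every subregion simultaneously, and that the bound $X^{1-\delta}$ they yield cleanly reaches $\delta=0.291$.
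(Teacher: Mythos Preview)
Your outline is too generic to reach $\delta=0.291$, and in two places it misidentifies where the actual work lies.

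First, the reduction. The paper does not run a traditional major/minor arc dissection. It quotes Pintz's explicit formula (Theorem~1 of \cite{Pintz2023}), which already expresses $r(m)$ as a finite double sum over singularities $\rho_i,\rho_j$ of $-L'/L(s,\chi)$ with $\operatorname{cond}(\chi)\le P$, up to an $O(\epsilon X)$ error, for all but $O(X^{1-\theta+\epsilon})$ even $m$. After discarding pairs with large $\operatorname{cond}(\bar\chi_i\chi_j)$ via the bound on the generalised singular series, the whole problem collapses to a \emph{single-modulus} zero inequality: for one $q\le X^\theta$ and a partition of the relevant characters into classes $\mathcal{K}_i$ with $\operatorname{cond}(\bar\chi\chi')=O(1)$ inside each class, one must show
\[
\sum_{i\ge1}\Bigl(\sum_{\lambda\in\mathcal{L}_i}e^{-\delta^{-1}\lambda}\Bigr)^2\le 1-c_1,\qquad \lambda=(1-\beta)\log q.
\]
There is no minor-arc contribution to control separately, no variance argument over $n$, and no Vinogradov $L^\infty$ bound; your $\Sigma_{\mathrm{minor}}(n)$ step is simply absent from the argument.

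Second, and more importantly, the mechanism of the improvement. You locate the gain in a finer $(\sigma,T)$-partition and stronger density inputs (reflection, fourth moment, Hal\'asz--Montgomery). None of these appear. The density input is exactly Pintz's Theorems C, I, J together with the Heath-Brown/Xylouris tables for zero-repulsion; nothing sharper is used. The new idea is a \emph{dichotomy on the zero count} inside a single class $\mathcal{K}_i$ (where $M(\mathcal{K}_i)=O(1)$): Pintz's inequality in this setting reads
\[
N(\Delta^2-\epsilon)+2\Delta\sum_{\lambda_j\le\Lambda}(\psi_j-\psi)\le 1,
\]
and one fixes $N_0\in\{4,5\}$ and splits into $N\le N_0-1$ (handle $R(\Lambda)$ by the trivial bound with few terms) or $N\ge N_0$ (the term $N\Delta^2\approx N_0\xi$ then replaces the single $\xi$ in Pintz's bound $\sum(\psi_j-\psi)\le(1-\xi)/(2\Delta)$ by $N_0\xi$, roughly halving the resulting $R(\Lambda)$). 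This saving is decisive precisely in the cases $\lambda_{1,1}\in[0.5,0.6]$ and $\lambda_{1,1}\ge0.6$, which are the bottleneck; your plan of ``re-optimising exponents against improved density input'' would not move the constant there, because the density estimates themselves are unchanged.

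In short: the proof is not a circle-method optimisation but a six-case verification of a quadratic inequality over zeros, and the advance over Pintz is the $N\ge N_0$ dichotomy, which your proposal does not contain.
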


In light of Pintz's reduction \cite[Section 2]{Pintz2018}, we only need to consider the contribution of zeroes of $L$-functions for a single moduli $q$, in a highly restricted area
\begin{equation}\label{eq:RestrictedArea}
\sigma \in [1-H(\log q)^{-1},1],\quad |t|\leq H,
\end{equation}
where $H$ is a fixed positive constant. We may denote the multi-set of these zeroes by $\mathcal{Z}(\chi, H)$, $\mathcal{Z}(\mathcal{K}, H)$ the union of $\mathcal{Z}(\chi , H)$ for all $\chi \in \mathcal{K}$ and  always write
\begin{equation}\label{eq:rho}
\rho=\beta+i\gamma,\quad 1-\rho= \frac{\lambda +i\mu }{\log q}.
\end{equation}
It remains to show the following result.

\begin{theorem}\label{thm:main2}
Let $q,H\geq 1$ and $c_0>0$. Let $\{\mathcal{K}_i\}_{i\geq 1}$ be pairwise disjoint sets of characters module $q$. For $i\geq1$, let $\mathcal{Z}_i=\mathcal{Z}(\mathcal{K}_i,H)$ and suppose
\begin{equation}\label{eq:Condition}
\max_{(\chi,\chi')\in \mathcal{K}_i^2}\operatorname{cond}(\overline{\chi} \chi')\leq c_0^{-1},\quad \min_{\rho \in \mathcal{Z}_i} \lambda \geq c_0.
\end{equation}
Then for $q\geq q(H,c_0)$ we have
\begin{equation}\label{eq:final}
\sum_{i\geq 1} \bb{\sum_{\rho \in \mathcal{Z}_i} e^{-\frac{10}{3} \lambda}}^2 \leq 1-c_1,
\end{equation}
where $c_1>0$ depends only on $c_0$.
\end{theorem}

In \cite{Pintz2018}, Pintz showed \eqref{eq:final} with $25/7$ in place of $10/3$. Hence we have reduced the value  by $5/21= 0.2380\cdots$. We can deduce from   the original argument  the estimate \eqref{eq:final} with $1.277$ in place of $1-c_1$.  The small  saving $0.277$ consists of  two parts: a refined estimate of sums of the type
\begin{equation}\label{eq:SumType1}
\sum_{\rho \in \mathcal{Z}}e^{-\frac{10}{3}\max \{\lambda,\Lambda \}}
\end{equation}
which contributes 0.114 and an estimate of sums of the type
\begin{equation}\label{eq:SumType2}
\sum_{\rho \in \mathcal{Z}_i}(e^{-\frac{10}{3}\lambda }-e^{-\frac{10}{3}\Lambda})
\end{equation}
which contributes 0.163.

We shall mention that the refined argument here has also uncovered the potential of Pintz's method to improve some other results such as Linnik's constant. Let $\mathcal{Z}(q,H)$ the union of all $\mathcal{Z}(\chi,H)$ with $\chi$ modulo $q$.

\begin{theorem}\label{thm:main3}
Let $q,H\geq 1$, $c_0>0$. Assume that
\begin{equation}\label{eq:MinCondition}
\min_{\rho \in \mathcal{Z}(q,H)} \lambda \geq c_0.
\end{equation}
Then for $q\geq q(H,c_0)$ we have
\begin{equation}\label{eq:final2}
\sum_{\rho\in \mathcal{Z}(q,H) } e^{-5\lambda} \leq 1-c_1,
\end{equation}
where $c_1>0$ depends only on $c_0$.
\end{theorem}

 Since it is much simpler than Theorem \ref{thm:main2}, we are not going to provide details for its proof. Recall the standard notation
\beq
\pi(X;q,a)=\sum_{\substack{p\leq X\\p\equiv a(\mod q)}} 1.
\eeq 
 Theorem \ref{thm:main3} combined with the argument in  \cite{Maynard} gives the following result.

\begin{theorem}\label{thm:main4}
Let $1\leq a\leq q$ and $(a,q)=1$. Let $X=q^{\theta}$ with $\theta\geq 5$ being fixed. Then we have for any $\epsilon>0$ that
\begin{equation}\label{eq:Bound1}
0<\pi(X;q,a)\leq \frac{(2+\epsilon)X}{\phi(q)\log X}, \quad q\geq q(\epsilon).
\end{equation}
\end{theorem}

This clearly shows $L=5$ is addmissible. The improvement from $L=5.2$ to $L=5$ is essentially due to Pintz's generalization of Heath-Brown's new zero density estimate, which counts all zeroes of $L(s,\chi )$ rather than the number of $L(s,\chi )$ which has at least a zero in the area
\begin{equation}\label{eq:Area}
\sigma \in [1-\Lambda(\log q)^{-1},1],\quad |t| \leq H,
\end{equation}
with $\Lambda\leq 1.6$. This fully avoids the use of the function \eqref{eq:f1} which is less optimal than the function \eqref{eq:F2}, in Heath-Brown's  method.

The remaining parts of this paper are arranged as follows. In Section \ref{sec:Preliminary} we will introduce some preliminary results for the estimates of certain sums over zeroes in Section \ref{sec:WeightedSums}. Finally   we  prove Theorem \ref{thm:main2} in Section \ref{sec:ProofTheorem2}. 

\section{Preliminary results}\label{sec:Preliminary}

Let $H\geq 1$ and $z_0,t_0\in [0,2H]$. We now introduce several special functions used in \cite{Heath-Brown}, including 
\begin{equation}\label{eq:f1}
f_1(t)=1_{t \in [0,t_0]} \sinh(z_0 (t_0-t))
\end{equation}
and its Laplace transform
\begin{equation}\label{eq:F1}
F_1(z)=\int_{0}^{\infty}f_1(t)e^{-tu}dt=\frac{1}{2}\bb{\frac{e^{z_0t_0}}{z_0+z}+\frac{e^{-z_0t_0}}{z_0-z}-\frac{2z_0e^{-zt_0}}{z_0^2-z^2}}.
\end{equation}
Note first that $f_1(t)$ and $F_1(z)$ vary in a compact set depending merely on $H$. Moreover, the function $F_1(z)$ obeys the property that for $\Re(z)\geq 0$,
\begin{equation}\label{eq:F1Property}
\Re(F_1(z))\geq \frac{z_0}{2} e^{z_0 t_0}|F_2(z_0+z)|,
\end{equation}
where
\begin{equation}\label{eq:F2}
F_2(z)=\bb{\frac{1-e^{-t_0 z}}{z}}^2.
\end{equation}
We will use this relation to treat
\begin{equation}\label{eq:ToTreat}
\max_{\rho \in \mathcal{Z}(\chi,H)}\sum_{\rho' \in \mathcal{Z}(\chi, H)} |F_2(\lambda+\lambda'-i(\mu-\mu'))|
\end{equation}
for some $\chi$ modulo $q$. Note that
\begin{equation}\label{eq:F1Bound}
|F_1(z)|\ll \frac{1}{1+|z|}
\end{equation}
and by a zero density estimate of Jutila \cite[Theorem 2]{Jut}, when $q\geq q(H)$,
\begin{equation}\label{eq:ZeroDensity}
\# \mathcal{Z}(\chi,H)=O(e^{3H}),
\end{equation}
Let $\phi=1/3$ be fixed. Then \cite[Lemma 5.2]{Heath-Brown} and \cite[Lemma 5.3]{Heath-Brown} easily lead to the following estimate.

\begin{lemma}\label{lem:1}
Let $H\geq 1$, $\epsilon>0$ and suppose $s=\sigma+it$ satisfy
\begin{equation}\label{eq:sCondition}
|\sigma-1| \leq \frac{H}{\log q},\quad |t|\leq 2H.
\end{equation}
Let $\chi$ be a non-principal character modulo $q$. If $\chi$ is non-principal, then for $q\geq q(H,\epsilon)$ we have
\begin{equation}\label{eq:lem1Result}
\sum_{\rho \in \mathcal{Z}(\chi,H)} \Re F_1((s-\rho)\log q)\leq F_1((\sigma-1)\log q)+\frac{\phi}{2}f_1(0)+\epsilon,
\end{equation}
\end{lemma}

\begin{proof}
Since $\chi$ is non-principal and $f_1$ vary in a compact set depending only on $H$, by \cite[Lemma 5.2]{Heath-Brown} there exists $\delta=\delta(H,\epsilon)$ such that
\begin{equation}\label{eq:DeltaBound}
\sum_{|1+it-\rho|\leq \delta} \Re F_1((s-\rho)\log q)\leq -K_1(s,\chi)+\frac{\phi}{2}f_1(0)+\epsilon,
\end{equation}
where
\begin{equation}\label{eq:K1}
K_1(s,\chi)=\frac{1}{\log q}\sum_{n\geq 1}\Lambda(n)\Re\bb{\frac{\chi(n)}{n^s}}f_1\bb{\frac{\log n}{\log q}}.
\end{equation}
We now remove those zeroes with $\beta<1-H(\log q)^{-1}$ in \eqref{eq:DeltaBound}. This can be done since $\sigma \geq 1-H(\log q)^{-1}$ and hence by \eqref{eq:F1Property}, $\Re F_1((s-\rho))\geq 0$. Then, we can add those zeroes in $\mathcal{Z}(\chi,H)$ with $|1+it-\rho|\geq \delta$ into the reduced sum, since by \eqref{eq:ZeroDensity} their total number is $O(e^{3H})$ and by \eqref{eq:F1Bound} each contributes $O(\delta^{-1}(\log q)^{-1})$. Thus we obtain
\begin{equation}\label{eq:Intermediate}
\sum_{\rho \in \mathcal{Z}(\chi,H)} \Re F_1((s-\rho)\log q)\leq -K_1(s,\chi)+\frac{\phi}{2}f_1(0)+2\epsilon,
\end{equation}
provided $q\geq q(H,\epsilon)$. Meanwhile, by \cite[Lemma 5.3]{Heath-Brown} we have
\begin{equation}\label{eq:K1Bound}
-K_1(s,\chi)\leq K_1(\sigma,\chi_q^0)\leq F_1((\sigma-1)\log q)+\epsilon.
\end{equation}
Now the conclusion is clear if we change the value of $\epsilon$.
\end{proof}

We now obtain a bound of \eqref{eq:ToTreat}.

\begin{lemma}\label{lem:2}
Let $H\geq 1$ and $\epsilon>0$. Let $\chi$ be a non-principal character modulo $q$. Assume that each zero $\rho \in \mathcal{Z}(\chi,H)$ satisfies $\lambda\geq \lambda_0\geq 0$. Then for $q\geq q(H,\epsilon)$, we have
\begin{equation}\label{eq:lem2Result}
\max_{\rho \in \mathcal{Z}(\chi,H)} \sum_{\rho' \in \mathcal{Z}(\chi, H)} |F_2(\lambda +\lambda'-i(\mu-\mu'))| \leq B(t_0,\lambda,\lambda_0)+\epsilon,
\end{equation}
where
\begin{equation}\label{eq:B}
B(t_0,\lambda,\lambda_0)=\frac{\phi}{2}\bb{\frac{1-e^{-2(\lambda+\lambda_0)t_0}}{\lambda+\lambda_0}}+\frac{1-e^{-\lambda t_0}}{\lambda (\lambda+\lambda_0)}+\frac{e^{-2(\lambda+\lambda_0)t_0}-e^{-\lambda t_0}}{(\lambda+\lambda_0)(\lambda+2\lambda_0)}.
\end{equation}
\end{lemma}

\begin{proof}
Fix $\rho \in \mathcal{Z}(\chi,H)$. We then set $z_0=\lambda+\lambda_0$ and $(1-s_0)\log q= \lambda_0+i\mu$, which leads to
\begin{equation}\label{eq:Setting}
\begin{split}
\sum_{\rho' \in \mathcal{Z}(\chi, H)} |F_2(\lambda+\lambda'-i(\mu-\mu'))| \leq \frac{2}{z_0}e^{-z_0t_0} \sum_{\rho' \in \mathcal{Z}(\chi, H)} \Re F_1(\lambda'-\lambda_0-i(\mu-\mu')).
\end{split}
\end{equation}
By \eqref{eq:lem1Result}, for $q\geq q(H,\epsilon)$ we have
\begin{equation}\label{eq:SumBound}
\begin{split}
& \sum_{\rho' \in \mathcal{Z}(\chi, H)} \Re F_1(\lambda'-\lambda_0-i(\mu-\mu')) \\
=& \sum_{\rho' \in \mathcal{Z}(\chi, H)} \Re F_1((s_0-\rho')\log q) \\
\leq & f_1(0)+F_1(-\lambda_0)+2\epsilon \\
=&\frac{\phi}{2}\bb{\frac{e^{z_0t_0}-e^{-z_0t_0}}{2}}+\frac{1}{2}\bb{\frac{e^{z_0 t_0}}{z_0-\lambda_0}+\frac{e^{-z_0 t_0}}{z_0+\lambda_0}-\frac{2z_0 e^{2\lambda_0t_0}}{z_0^2-\lambda_0^2}}+2\epsilon,
\end{split}
\end{equation}
which immediately leads to \eqref{eq:lem2Result} from \eqref{eq:Setting}.
\end{proof}

We need the following elementary property of $B(t_0,\lambda,\lambda_0)$.

\begin{lemma}\label{lem:3}
Suppose $\lambda,\Lambda, t_0,\lambda_0 \geq 0$, then we have
\begin{equation}\label{eq:lem3Result}
e^{-\max\{\Lambda-\lambda,0\}t_0}B(t_0,\lambda,\lambda_0)\leq B(t_0,\Lambda,\lambda_0).
\end{equation}
\end{lemma}

\begin{proof}
It suffices to show $B(t_0,\lambda,\lambda_0)$ is decreasing while $e^{\lambda t_0}B(t_0,\lambda,\lambda_0)$ is increasing, with $\lambda$. When $\lambda_0=0$, we see
\begin{equation}\label{eq:B0}
B(t_0,\lambda,0)=\frac{\phi}{2}\bb{\frac{1-e^{-2\lambda t_0}}{\lambda}}+\bb{\frac{1-e^{-\lambda t_0}}{\lambda}}^2
\end{equation}
clearly obeys this property. Hence we may assume $\lambda_0>0$. Meanwhile, since
\begin{equation}\label{eq:FirstTerm}
\frac{\phi}{2}\bb{\frac{1-e^{-2(\lambda+\lambda_0)t_0}}{\lambda+\lambda_0}}
\end{equation}
satisfies this property, it remains to consider
\begin{equation}\label{eq:g}
g(\lambda)=\frac{1-e^{-\lambda t_0}}{\lambda (\lambda+\lambda_0)}+\frac{e^{-2(\lambda+\lambda_0) t_0}-e^{-\lambda t_0}}{(\lambda+\lambda_0)(\lambda+2\lambda_0)}.
\end{equation}
Notice that by Laplace inversion, we have
\begin{equation}\label{eq:LaplaceInversion}
g(\lambda)=\frac{1}{\lambda_0}\int_{0}^{2t_0}\bb{e^{-2\lambda_0 \max\{t-t_0,0\}}-e^{-\lambda_0t}}e^{-\lambda t}dt
\end{equation}
and hence
\begin{equation}\label{eq:gExp}
e^{\lambda t_0}g(\lambda)=\frac{1}{\lambda_0}\int_0^{t_0}(1-e^{-\lambda_0(t_0-t)})(e^{\lambda t}+e^{-(\lambda+2\lambda_0)t})dt.
\end{equation}
From this we derive that
\begin{equation}\label{eq:gDerivative}
g'(\lambda)=-\frac{1}{\lambda_0}\int_{0}^{2t_0}\bb{e^{-2\lambda_0 \max\{t-t_0,0\}}-e^{-\lambda_0t}}te^{-\lambda t}dt\leq 0
\end{equation}
and
\begin{equation}\label{eq:gExpDerivative}
(e^{\cdot t_0}g)'(\lambda)=\frac{1}{\lambda_0}\int_0^{t_0}(1-e^{-\lambda_0(t_0-t)})t(e^{\lambda t}-e^{-(\lambda+2\lambda_0)t})dt\geq 0,
\end{equation}
as desired.
\end{proof}

The final task of this section is to label the zeroes. In what follows we will keep the notations consistent with that in Theorem \ref{thm:main2}. For convenience we label $\rho \in \mathcal{Z}_i$ by
\begin{equation}\label{eq:Labeling}
\lambda_{i,1}\leq \lambda_{i,2}\leq \cdots,
\end{equation}
and $\{\mathcal{Z}_i\}_{i\geq 1}$ by
\begin{equation}\label{eq:LabelingSets}
\lambda_{1,1}\leq \lambda_{2,1}\leq \cdots.
\end{equation}
For $i\geq 1$, let
\begin{equation}\label{eq:Si}
S_i=\sum_{j\geq 1}e^{-\frac{10}{3}\lambda_{i,j}}.
\end{equation}
It now remains to show
\begin{equation}\label{eq:FinalEq}
\sum_{i\geq 1}S_i^2\leq 1-c_1.
\end{equation}

For each $\Lambda\leq H$, let $N(\Lambda)$ denote the number of zeroes $\rho=\beta+i\gamma$ of
\begin{equation}\label{eq:Mq}
\prod_{\chi (\mod q)}L(s,\chi),
\end{equation}
in the region
\begin{equation}\label{eq:Region}
\beta\geq 1-\frac{\Lambda}{\log q},\quad |\gamma| \leq H.
\end{equation}
The classical zero free region for $L$-functions implies that $N(\Lambda)\leq 1$ holds for some $\Lambda>0$. Meanwhile, if $N(\Lambda)=1$ does hold for some very small value of $\Lambda$, then $N(\Lambda)\leq 1$ will be expected to hold for some considerably large value of $\Lambda$, due to the Deuring-Heilbronn phenomenon. Explicit versions of this phenomenon have already been established in \cite{Heath-Brown} and \cite{Xylouris}, and they are still applicable in our setting if  $q\geq q(H,c_0)$, which we henceforth assume,

We end up this section with the following Lemma, according to which our final argument will be separated into several cases.

\begin{lemma}\label{lem:Cases}
Let $q$ be sufficiently large.
\begin{enumerate}[(1)]
\item If $\lambda_{1,1}\leq 0.01$, then $N(5.68)\leq 1$.
\item If $\lambda_{1,1}\leq 0.10$, then $N(3.08)\leq 1$.
\item If $\lambda_{1,1}\leq 0.30$, then $N(1.58)\leq 1$.
\item If $\lambda_{1,1}\leq 0.40$, then $N(1.29)\leq 1$.
\item If $\lambda_{1,1}\leq 0.60$, then $N(0.92)\leq 2$.
\item If $\lambda_{1,1}\leq 0.62$, then either $N(0.85)\leq 1$ or $N(0.91)\leq 2$.
\item If $\lambda_{1,1}\leq 0.64$, then $N(0.85)\leq 2$.
\item If $\lambda_{1,1}\leq 0.68$, then $N(0.74)\leq 2$.
\end{enumerate}
Despite the value of $\lambda_{1,1}$, we have always $N(1.09\log\lambda_{1,1}^{-1})\leq 1$, $N(0.702)\leq 2$, and $\lambda_{i,1}\geq 0.857$ for $i\geq 5$.
\end{lemma}

\begin{proof}
Values of $\Lambda$ for which $N(\Lambda)\leq 1$ are taken from Table 2,3,4,5,7 of \cite{Heath-Brown} and $N(\Lambda)\leq 2$ are taken from Tabellen 2,3,7 of \cite{Xylouris}. The range $\lambda\leq 1.09 \log \lambda_{1,1}^{-1}$ is a simple consequence of \cite[Lemma 8.8]{Heath-Brown}. The result $\lambda_{i,1}\geq 0.857$ for $i\geq 5$ is \cite[Theorem 3]{Heath-Brown}.
\end{proof}

\section{Weighted sums over zeroes}\label{sec:WeightedSums}

In this section we confine our attention to the sum
\begin{equation}\label{eq:S}
S=\sum_{j\geq 1}e^{-\frac{10}{3}\lambda_j},
\end{equation}
with $\max \{c_0,\lambda_0\}\leq \lambda_1\leq \lambda_2\leq \cdots \leq H$.  The underlying set $\mathcal{K}$ here may be a union of several $\mathcal{K}_i$ or simply a single one. Let
\begin{equation}\label{eq:M}
M=M(\mathcal{K})=\max_{(\chi,\chi')\in \mathcal{K}^2}\operatorname{cond}(\overline{\chi} \chi').
\end{equation}
Clearly, $M=O(1)$ if $\mathcal{K}=\mathcal{K}_i$, $i\geq 1$. This would be an advantage when estimating the value of $S$, as we can see below.

\begin{lemma}\label{lem:Density1}
Let $x,y,z,\epsilon,\Lambda>0$ be given and
\begin{equation}\label{eq:k}
k=\begin{cases}
2(\phi+3x+y+z), &M=O(1),\\
2(2\phi+3x+y+z), &\text{otherwise}.
\end{cases}
\end{equation}
Then for $q\geq q(k,\epsilon)$ we have
\begin{equation}\label{eq:Density1Result}
\sum_{j\geq 1} e^{-k\max\{\lambda_{j},\Lambda\}}\leq (1+\epsilon) C(x,y,z,\Lambda,\lambda_0),
\end{equation}
where
\begin{equation}\label{eq:C}
C(x,y,z,\Lambda,\lambda_0)=\frac{1}{xz}\bb{\frac{1}{2}+\frac{\phi+x}{y}}\sqrt{B\bb{\phi+x+y,\Lambda,\lambda_0}B(z,\Lambda,\lambda_0)}.
\end{equation}
\end{lemma}

\begin{proof}
Let $w(\rho)=w(\lambda )$ be a positive-valued function to be determined later. When $q$ is sufficiently large, all character can be assumed to be non-principal, since Riemann zeta function has no zeroes in \eqref{eq:RestrictedArea}. By following the argument of \cite[Page 19-25]{Pintz2018}, with a slight change on the notations of parameters, it is easy to show that for $q\geq q(k,\epsilon)$,
\begin{equation}\label{eq:KeyInequality}
\begin{split}
&\bb{\sum_{\rho \in \mathcal{Z}(\mathcal{K},H)}w(\rho)}^2\leq \frac{1+\epsilon}{xz}\bb{\frac{1}{2}+\frac{\phi+x}{y}} \\
&\sum_{\chi\in \mathcal{K}}\sum_{\rho \in \mathcal{Z}(\chi,H)}\sum_{\rho' \in \mathcal{Z}(\chi,H)}w(\rho) w(\rho') e^{k\frac{\lambda+\lambda'}{2}}\sqrt{(G_1G_2)(\lambda+\lambda'-i(\mu-\mu'))}
\end{split},
\end{equation}
where $G_1$ is defined by $F_2$ using $t_0=t_1:=x+y+\phi$ and $G_2$ using $t_0=t_2:=z$.
Now by the basic inequality $2ab\leq a^2+b^2$ and Cauchy-Schwartz inequality, for any $w_0(\rho)>0$ and $w(\rho)=e^{-k\lambda}w_0(\rho)$ we derive from \eqref{eq:lem2Result} that
\begin{equation}\label{eq:CauchySchwartz}
\begin{split}
&\sum_{\chi\in \mathcal{K}}\sum_{\rho \in \mathcal{Z}(\chi,H)}\sum_{\rho' \in \mathcal{Z}(\chi,H)}w(\rho) w(\rho') e^{k\frac{\lambda+\lambda'}{2}}\sqrt{(G_1G_2)(\lambda+\lambda'-i(\mu-\mu'))} \\
& \leq \sum_{\chi\in \mathcal{K}}\sum_{\rho \in \mathcal{Z}(\chi,H)}w(\rho)^2 e^{k\lambda}\sum_{\rho' \in \mathcal{Z}(\chi,H)}\sqrt{(G_1G_2)(\lambda+\lambda'-i(\mu-\mu'))} \\
&= \sum_{\chi\in \mathcal{K}}\sum_{\rho \in \mathcal{Z}(\chi,H)}w(\rho)w_0(\rho) \sum_{\rho' \in \mathcal{Z}(\chi,H)}\sqrt{(G_1G_2)(\lambda+\lambda'-i(\mu-\mu'))} \\
& \leq \max_{\rho}w_0(\rho)\sqrt{B(t_1,\lambda,\lambda_0) B(t_2,\lambda,\lambda_0)}\sum_{\rho\in \mathcal{Z}(\mathcal{K},H)}w(\rho).
\end{split}
\end{equation}
This means
\begin{equation}\label{eq:Simplified}
\begin{split}
\sum_{\rho \in \mathcal{Z}(\mathcal{K},H)}w(\rho)\leq \frac{1+\epsilon}{xz}\bb{\frac{1}{2}+\frac{\phi+x}{y}}\max_{\rho}w_0(\rho) \sqrt{B(t_1,\lambda,\lambda_0)B(t_2,\lambda,\lambda_0)}.
\end{split}
\end{equation}
Let $w_0(\rho)=e^{-\frac{(t_1+t_2)\max\{\Lambda-\lambda,0\}}{2}}$. By Lemma \ref{lem:3}, we have
\begin{equation}\label{eq:w0Bound}
\begin{split}
&w_0(\rho) \sqrt{B(t_1,\lambda,\lambda_0)B(t_2,\lambda,\lambda_0)}\leq \sqrt{B(t_1,\Lambda,\lambda_0)B(t_2,\Lambda,\lambda_0)}
\end{split}
\end{equation}
This implies
\begin{equation}\label{eq:FinalBound}
\sum_{\rho \in \mathcal{Z}(\mathcal{K},H)}w(\rho)\leq (1+\epsilon)C(x,y,z,\Lambda,\lambda_0).
\end{equation}
Note that when $\lambda\leq \Lambda$,
\begin{equation}\label{eq:wLower}
\begin{split}
w(\rho)&=e^{-k\lambda -\frac{t_1+t_2}{2}\max\{\Lambda-\lambda,0\}} \\
&\geq \min \{ e^{-\frac{t_1+t_2}{2}\Lambda}, e^{-k\lambda} \} \\
&\geq \min \{ e^{-2(3x+y+z+\phi)}, e^{-k\lambda} \} \\
&\geq e^{-k\Lambda}
\end{split}
\end{equation}
which concludes the proof.
\end{proof}

From this result it is easy to derive estimates for $N(\Lambda)$ (in this section $N(\Lambda)$ is restricted to the set $\mathcal{K}$), that is,
\begin{equation}\label{eq:NEstimate}
N(\Lambda)\leq C(x,y,z,\Lambda,\lambda_0) e^{k\Lambda}.
\end{equation}
For example, by $\lambda_0=1/2$, $x=1/12$, $y=1/4$ and $z=1/6$ we have
\begin{equation}\label{eq:Example}
N(\Lambda)\leq 19.62e^{\frac{8}{3}\Lambda},\quad \Lambda\geq 1.311.
\end{equation}
This improves the corresponding result of \cite{Pintz2018}, where the coefficient is $22.281$. To make full use of this estimate, for any $\Lambda\in [c_0,H]$ we separate $S$ into two parts, that is,
\begin{equation}\label{eq:TandR}
T=T(\Lambda)=\sum_{j\geq 1} e^{-\frac{10}{3} \max \{\lambda_{j},\Lambda\}},\quad R=R(\Lambda)=S-T(\Lambda).
\end{equation}
Then for $q\geq q(k,\epsilon)$ and $k\in [0,10/3]$, we have by Lemma \ref{lem:Density1} that
\begin{equation}\label{eq:TEstimate}
T(\Lambda)\leq (1+\epsilon)e^{-(\frac{10}{3}-k)\Lambda} C(x,y,z,\Lambda,\lambda_0).
\end{equation}
Choosing optimal parameters for each given $\Lambda$ and $\lambda_0$ will give desired bounds of $T(\Lambda)$ in the general case.  When $M=O(1)$, $T(\Lambda)$ can be estimated by a more efficient way. In this case we will only apply \eqref{eq:TEstimate} with $\lambda_0=0$ and $\Lambda\geq 5.2$ to obtain the following result.

\begin{corollary}\label{cor:1}
Let $q$ be sufficiently large and $M=O(1)$. For $\Lambda\geq 5.2$, we have
\begin{equation}\label{eq:CorollaryResult}
T(\Lambda)\leq 100 e^{-2.22\Lambda}\leq 0.001.
\end{equation}
\end{corollary}

\begin{proof}
Since $C(x,y,z,\Lambda,0)$ decreases as $\Lambda$ increases, it follows that for $\Lambda\geq 5.2$,
\begin{equation}\label{eq:TLambda}
T(\Lambda)\leq (1+\epsilon)e^{-(\frac{10}{3}-k)\Lambda} C(x,y,z,5.2,0).
\end{equation}
When $q$ is sufficiently large, the choice
\begin{equation}\label{eq:Choice}
x = 0.029,\quad y = 0.083,\quad z = 0.052
\end{equation}
gives
\begin{equation}\label{eq:Values}
C(x,y,z,5.2,0)\leq 99.728\cdots,\quad \frac{10}{3}-k= 2.226\cdots,
\end{equation}
which immediately implies \eqref{eq:CorollaryResult}. Meanwhile, we see
\begin{equation}\label{eq:T52}
T(5.2)\leq 100e^{-2.22\cdot 5.2}=0.00096\cdots.
\end{equation}
\end{proof}

When $M=O(1)$ and $\Lambda\leq 5.2$, the above estimate shows that the terms for which $\lambda_j\geq 5.2$ in $T(\Lambda)$ are negligible. As for the terms $\lambda_j\in [\Lambda, 5.2]$, we can bound them by the new zero density estimate of Heath-Brown, which is generalized to our setting by Pintz.

Let $G(z)$ be the Laplace transform of
\begin{equation}\label{eq:gFunction}
g(u)=\frac{1}{30}(2-u)^3(4+6u+u^2),\quad u\in [0,2],
\end{equation}
that is,
\begin{equation}\label{eq:G}
G(z)=\int_0^2g(u)e^{-uz}du.
\end{equation}
For $x>0$, let $f_x(u):=x\cdot g(ux)$ whose Laplace transform is denoted by $F_x(z)=G(z/x)$. Define for $j\geq 1$,
$$\psi_{j}=\frac{F_x(\lambda_{j}-\lambda_{0})}{F_x(-\lambda_{0})}, \quad \psi=\frac{F_x(\Lambda-\lambda_{0})}{F_x(-\lambda_{0})},\quad \xi=\frac{\phi}{2}\frac{f_x(0)}{F_x(-\lambda_{0})},\quad \Delta=\psi-\xi.$$
Let
\begin{equation}\label{eq:D}
D=D(\Lambda)=\sum_{\substack{j\geq 1\\ \lambda_j\leq \Lambda}} (\psi_{j}-\psi).
\end{equation}
The following two basic inequalities based on these quantities are essentially established in  \cite{Pintz2018}.

\begin{lemma}\label{lem:Density2}
With the notations above and any $\epsilon>0$, we have for $q\geq q(\epsilon)$ that
\begin{equation}\label{eq:Basic1}
(\Delta^2-\xi-\epsilon)N+ 2\Delta D \leq 1-\xi
\end{equation}
provided $x\geq 4\lambda_{0}/5$ and $\Delta\geq \sqrt{\xi+\epsilon}$. When $M=O(1)$, we have instead
\begin{equation}\label{eq:Basic2}
(\Delta^2-\epsilon)N+2\Delta D \leq 1
\end{equation}
provided $x\geq 4\lambda_0/5$ and $\Delta\geq \sqrt{\epsilon}$.
\end{lemma}

\begin{proof}
The inequality \eqref{eq:Basic1} is exactly \cite[Equation (8.3)]{Pintz2018}. The inequality \eqref{eq:Basic2} can be derived in a similar way from \cite[Equation (7.29)]{Pintz2018}.
\end{proof}

We first utilize this to continue the estimate of $T(\Lambda)$ provided $M=O(1)$. Note that by \eqref{eq:Basic2} we have
\begin{equation}\label{eq:ZeroDensity2}
N\leq \frac{1}{\Delta^2-\epsilon},
\end{equation}
which is exactly \cite[Theorem I]{Pintz2018}.
Let $\Lambda=\Lambda_0\leq \Lambda_1\leq \cdots \leq \Lambda_{200}=5.2$ be equally distributed. For each $\Lambda_i$, we apply \eqref{eq:ZeroDensity2} to obtain a $N_i$ with $x=x_i$ being chosen optimally. Recall that $T(5.2)\leq 0.001$, it follows
\begin{equation}\label{eq:TSum}
T(\Lambda)\leq e^{-\frac{10}{3} \Lambda_0} N_0+\sum_{i=1}^{200}(N_{i}-N_{i-1})e^{-\frac{10}{3} \Lambda_{i-1}}+0.001.
\end{equation}

Until now we have introduced all methods estimating $T(\Lambda)$. As for the estimate of
\begin{equation}\label{eq:R}
R(\Lambda)=\sum_{\substack{j\geq 1\\\lambda_{j}\leq \Lambda}}(e^{-\frac{10}{3}\lambda_{j}}-e^{-\frac{10}{3}\Lambda}),
\end{equation}
we will basically follow the treatment of \cite{Pintz2018} to relate it to the sum
\begin{equation}\label{eq:D2}
D(\Lambda)= \sum_{\substack{j\geq 1\\\lambda_j\leq \Lambda}}(\psi_{j}-\psi).
\end{equation}
In the sequel we will always assume that $N(\Lambda)\geq 3$, otherwise bounds from trivial methods would be better. Meanwhile, we will preassign the values of $\lambda_1$, $\lambda_2$, and a lower bound $\lambda_3\geq \lambda^*$. Hence $\lambda_0=\lambda_1$ and we can write
\begin{equation}\label{eq:RDecomposition}
R(\Lambda)=\sum_{j=1}^2(e^{-\frac{10}{3}\lambda_{j}}-e^{-\frac{10}{3}\Lambda})+\sum_{\substack{j\geq 3\\\lambda_{j}\leq \Lambda}}(e^{-\frac{10}{3}\lambda_{j}}-e^{-\frac{10}{3}\Lambda}).
\end{equation}
Since the function
\begin{equation}\label{eq:Function}
\frac{e^{au}-1}{e^{bu}-1},\quad (b\geq a),
\end{equation}
decreases as $u$ increases, when $x\geq 3/5$ we see that
\begin{equation}\label{eq:Ratio}
\frac{\psi_j-\psi}{e^{K(\Lambda-\lambda_j)}-1}=\frac{1}{F_x(-\lambda_0)}\int_0^{2/x}F_x(u) e^{-u(\Lambda-\lambda_0)}\frac{e^{u(\Lambda-\lambda_j)}-1}{e^{\frac{10}{3}(\Lambda-\lambda_j)}-1}du
\end{equation}
increases as $\lambda_j$ increases. Hence by our labelling of $\lambda_j$,
\begin{equation}\label{eq:SumBound2}
\sum_{\substack{j\geq 3\\\lambda_{j}\leq \Lambda}}(e^{-\frac{10}{3}\lambda_{j}}-e^{-\frac{10}{3}\Lambda})\leq \frac{e^{-\frac{10}{3}\lambda^*}-e^{-\frac{10}{3}\Lambda}}{\psi^*-\psi}\sum_{\substack{j\geq 3\\\lambda_{j}\leq \Lambda}}(\psi_j-\psi),
\end{equation}
where
\begin{equation}\label{eq:psiStar}
\psi^*=\frac{F_x(\lambda^*-\lambda_0)}{F_x(-\lambda_0)}.
\end{equation}
Recall that by Lemma \ref{lem:Density2} we have
\begin{equation}\label{eq:Inequality}
N (\Delta^2-\xi-\epsilon)+ 2\Delta D \leq 1-\xi.
\end{equation}
To make full use of this inequality, for any given $\lambda_0$ we will choose $\Lambda$ to be as large as possible such that there exists $x\geq \max \{4\lambda_0/5, 3/5\}$ for which $\Delta^2>\xi$. Hence $\Delta^2\approx \xi$ and with minor loss we obtain
\begin{equation}\label{eq:DBound}
D\leq \frac{1-\xi}{2\Delta},
\end{equation}
which is exactly \cite[Theorem J]{Pintz2018}. The desired pairs of $\lambda_0$ and $\Lambda$ in our argument are
\begin{equation}\label{eq:Pairs}
\begin{cases}
\lambda_0=0.60,&\Lambda=1.348, \\
\lambda_0=0.62,& \Lambda= 1.355, \\
\lambda_0=0.64,& \Lambda= 1.363, \\
\lambda_0=0.66,& \Lambda= 1.370, \\
\lambda_0=0.68,& \Lambda= 1.378, \\
\lambda_0=0.92,& \Lambda= 1.467.
\end{cases}
\end{equation}
Gathering these inequalities, we finally arrive at
\begin{equation}\label{eq:RFinal}
R(\Lambda)\leq \sum_{j=1}^2(e^{-\frac{10}{3}\lambda_{j}}-e^{-\frac{10}{3}\Lambda})+\frac{e^{-\frac{10}{3}\lambda^*}-e^{-\frac{10}{3}\Lambda}}{\psi^*-\psi} \bb{\frac{1-\xi}{2\Delta}-\sum_{j=1}^2(\psi_j-\psi)},
\end{equation}
provided $x\geq \max\{4\lambda_0/5,3/5\}$, $\Delta>\sqrt{\xi}$ and $q$ sufficiently large. Since we require $x\geq 3/5$, by \cite[Theorem K]{Pintz2018} the above bound is still valid when $\lambda_1$ and $\lambda_2$ get larger than the preassigned values.

The next refinement comes when estimating $R(\Lambda)$ for $M=O(1)$. Recall in this case we have
\begin{equation}\label{eq:Basic2M}
(\Delta^2-\epsilon)N+2\Delta D\leq 1.
\end{equation}
If we argue as before, it would follow
\begin{equation}\label{eq:DBoundM}
D\leq \frac{1}{2\Delta},
\end{equation}
which is actually worse than \eqref{eq:DBound} since we lose a $-\xi$ in the numerator. The key observation is that, by our setting of $\Lambda$, we have $\Delta^2\approx \xi$ and thus an additional assumption $N\geq N^*+1$ will imply, approximately, a better bound
\begin{equation}\label{eq:DBoundM2}
D\leq \frac{1-(N^*+1)\xi}{2\Delta}.
\end{equation}
The empirical choice of $N^*$ will be $3$ or $4$. In this way, we have either $N\leq N^*$ which means
\begin{equation}\label{eq:RCase1}
R(\Lambda)\leq \sum_{j=1}^2(e^{-\frac{10}{3}\lambda_{j}}-e^{-\frac{10}{3}\Lambda})+(N^*-2)(e^{-\frac{10}{3}\lambda^*}-e^{-\frac{10}{3}\Lambda})
\end{equation}
or
\begin{equation}\label{eq:RCase2}
R(\Lambda)\leq \sum_{j=1}^2(e^{-\frac{10}{3}\lambda_{j}}-e^{-\frac{10}{3}\Lambda})+\frac{e^{-\frac{10}{3}\lambda^*}-e^{-\frac{10}{3}\Lambda}}{\psi^*-\psi} \bb{\frac{1-(N^*+1)\Delta^2}{2\Delta}-\sum_{j=1}^2(\psi_j-\psi)}.
\end{equation}
Also, this bound is still valid for larger $\lambda_1$ and $\lambda_2$. Now we have introduced all methods estimating $R(\Lambda)$.

It remains to apply these estimates to prove Theorem \ref{thm:main2}.

\section{Proof of theorem \ref{thm:main2}}\label{sec:ProofTheorem2}

We now roughly distinguish the following five cases
\begin{itemize}
\item $\lambda_{1,1}\in [c_0,0.01]$
\item $\lambda_{1,1}\in [0.01,0.40]$
\item $\lambda_{1,1}\in [0.40,0.60]$
\item $\lambda_{1,1}\in [0.60,0.62]$
\item $\lambda_{1,1}\in [0.62,H]$
\end{itemize}
In different cases, we will choose different values of $\Lambda$ to separate $S_i=T_i+R_i$ to apply those estimates.

\textbf{The case $\lambda_{1,1}\in [c_0,0.01]$:}
In this case we let $\Lambda=\max \{5.68,1.09\log \lambda_{1,1}^{-1}\}$. Hence by Lemma \ref{lem:Cases} we have $N(\Lambda)\leq 1$, which means
\begin{equation}\label{eq:RiCase1}
R_i=\begin{cases}
e^{-\frac{10}{3}\lambda_{1,1}}-e^{-\frac{10}{3}\Lambda},& i=1, \\
0, &i\geq 2.
\end{cases}
\end{equation}
It follows that
\begin{equation}\label{eq:SumSquaresCase1}
\begin{split}
\sum_{i\geq 1}S_i^2&\leq e^{-2\cdot \frac{10}{3}\cdot \lambda_{1,1}}+2T_1+\sum_{i\geq 1} T_i^2 \\
&\leq e^{-\frac{20}{3}\lambda_{1,1}}+\max_{i\geq 1}T_i \bb{2+\sum_{i\geq 1}T_i}.
\end{split}
\end{equation}
Since $\Lambda\geq 5.2$, by Corollary \ref{cor:1} we have
\begin{equation}\label{eq:MaxTi}
\max_{i\geq 1}T_i\leq 100e^{-2.22\Lambda}.
\end{equation}
Meanwhile using \eqref{eq:TEstimate} with $\lambda_0=0$ and $\Lambda=5.68$ we see
\begin{equation}\label{eq:SumTi}
\sum_{i\geq 1}T_i\leq 0.02.
\end{equation}
Thus
\begin{equation}\label{eq:FinalBoundCase1}
\sum_{i\geq 1}S_i^2\leq e^{-\frac{20}{3}\lambda_{1,1}}+202e^{-2.22 \Lambda}.
\end{equation}
The critical value $\lambda_{1,1}=e^{-\frac{5.68}{1.09}}$ gives the bound $0.965$. Noticing that the right hand side is strictly convex if $\lambda_{1,1}\leq e^{-\frac{5.68}{1.09}}$ and decreasing if $\lambda_{1,1}\geq e^{-\frac{5.68}{1.09}}$, we conclude that there exists $c_1>0$ depending only on $c_0$ such that
\begin{equation}\label{eq:FinalResultCase1}
\sum_{i\geq 1}S_i^2\leq 1-c_1
\end{equation}
as desired.

\textbf{The case $\lambda_{1,1}\in [0.01,0.40]$:}
We now further distinguish three cases: $\lambda_{1,1}\in [0.01,0.10]$, $\lambda_{1,1}\in [0.10,0.30]$ and $\lambda_{1,1}\in [0.30,0.40]$. However, they will be treated in a similar manner. Since our method takes its advantages when $\lambda_0$ is large, it suffices to treat the special case $\lambda_{1,1}\in [0.30,0.40]$.

Now by Lemma \ref{lem:Cases} we have $N(1.29)=1$. Hence we may let $\Lambda=1.29$ to obtain
\begin{equation}\label{eq:SumSquaresCase2}
\sum_{i\geq 1}S_i^2\leq (e^{-\frac{10}{3}\cdot 0.3}+T_1)^2+\sum_{i\geq 2} T_i^2.
\end{equation}
Using \eqref{eq:TEstimate} and \eqref{eq:NEstimate}, with $\lambda_0=0.30$ for $i=1$ and $\lambda_0=1.29$ for $i\geq 2$, we see
\begin{equation}\label{eq:TiValues}
T_i\leq \begin{cases}
0.1231,& i=1, \\
0.0971,& i\geq 2,
\end{cases}
\quad \sum_{i\geq 2}T_i\leq 6.85,
\end{equation}
which means
\begin{equation}\label{eq:FinalBoundCase2}
\sum_{i\geq }S_i^2\leq (0.3679+0.1231)^2+0.0971\cdot 6.85=0.9062\cdots.
\end{equation}

\textbf{The case $\lambda_{1,1}\in [0.40,0.60]$:}
From now on estimates of $R(\Lambda)$ will get involved. By Lemma \ref{lem:Cases}, we have $N(0.92)\leq 2$. So there exists $i_0\geq 1$ such that $\lambda_{i,1}\geq 0.92$ holds for $i>i_0$. Hence we may first consider
\begin{equation}\label{eq:SumLargei}
\sum_{i>i_0}S_i^2.
\end{equation}
Let $\Lambda=1.467$, then we have
\begin{equation}\label{eq:DecompositionLarge}
\begin{split}
\sum_{i>i_0}S_i^2&=\sum_{i>i_0}(T_i+R_i)^2 \\
&=\sum_{i>i_0}R_i^2+\sum_{i\geq i_0}T_i\bb{2R_i+T_i}.
\end{split}
\end{equation}
Using \eqref{eq:TEstimate} and \eqref{eq:NEstimate}, with $\lambda_0=0.92$ and $\Lambda=1.467$, we see
\begin{equation}\label{eq:TiLarge}
\max_{i>i_0}T_i\leq 0.0704,\quad \sum_{i>i_0}T_i\leq 6.33.
\end{equation}
Meanwhile, using \eqref{eq:RCase1}, \eqref{eq:RCase2} and \eqref{eq:DBoundM2} with $\lambda_1=\lambda_2=\lambda^*=0.92$ and $N^*=4$, we have
\begin{equation}\label{eq:RiLarge}
\max_{i>i_0} R_i\leq 0.157,\quad \sum_{i>i_0} R_i\leq 0.457.
\end{equation}
Hence
\begin{equation}\label{eq:FinalLarge}
\sum_{i>i_0}S_i^2\leq 0.157\cdot 0.557+0.0704\cdot (2\cdot 0.457+6.33)=0.5817\cdots.
\end{equation}
On the other hand, for $i\leq i_0$ we always have the bound
\begin{equation}\label{eq:TiSmall}
T_i\leq 0.0808
\end{equation}
from \eqref{eq:TEstimate} with $\lambda_0=0.40$ and $\Lambda=1.467.$ But for $R_i$, the case $i_0=2$ gives $\lambda_1=0.40$ and $\lambda_2=\lambda^*=0.92$. So by \eqref{eq:RCase1} and \eqref{eq:RCase2} with $N^*=4$ we have
\begin{equation}\label{eq:RiSmall1}
R_1,R_2\leq 0.374,
\end{equation}
and hence
\begin{equation}\label{eq:FinalSmall1}
\sum_{i\leq i_0}S_i^2\leq 2(0.374+0.0808)^2=0.4136\cdots.
\end{equation}
The case $i_0=1$ gives $\lambda_1=\lambda_2=0.40$ and $\lambda^*=0.92$, which means
\begin{equation}\label{eq:RiSmall2}
R_1\leq 0.556
\end{equation}
and thus
\begin{equation}\label{eq:FinalSmall2}
\sum_{i\leq i_0}S_i^2\leq (0.556+0.0808)^2=0.4055\cdots.
\end{equation}
In conclusion, we have
\begin{equation}\label{eq:FinalCase3}
\sum_{i\geq 1} S_i^2\leq 0.995\cdots.
\end{equation}

\textbf{The case $\lambda_{1,1}\in [0.60,H]$:}
We further distinguish four cases: $\lambda_{1,1}\in [0.60,0.62]$, $\lambda_{1,1}\in [0.62,0.64]$, $\lambda_{1,1}\in [0.64,0.68]$ and $\lambda_{1,1}\in [0.68,H]$.

When $\lambda_{1,1}\in [0.60,0.62]$, let $\Lambda=1.348$. By Lemma \ref{lem:Cases}, in this case we have either $N(0.85)\leq 1$ or $N(0.91)\leq 2$. When $N(0.91)\leq 2$, using \eqref{eq:TEstimate} and \eqref{eq:NEstimate} with $\lambda_0=0.60$ and $\Lambda=1.348$ we have
\begin{equation}\label{eq:TiCase4}
\max_{i\geq 1} T_i\leq 0.101,\quad \sum_{i\geq 1}T_i\leq 7.63.
\end{equation}
And using \eqref{eq:TEstimate} with $\lambda_0=0.91$ we have
\begin{equation}\label{eq:TiLargeCase4}
\max_{i\geq 3} T_i\leq 0.0927.
\end{equation}
Meanwhile, using \eqref{eq:RCase1}, \eqref{eq:RCase2} and \eqref{eq:DBoundM2} with $\lambda_1=\lambda_2=0.60$, $\lambda^*=0.91$ and $N^*=3$, we obtain
\begin{equation}\label{eq:RiCase4}
\max_{i\geq 1} R_i\leq 0.300,\quad \sum_{i\geq 1}R_i\leq 0.581.
\end{equation}
Hence
\begin{equation}\label{eq:FinalCase4}
\begin{split}
\sum_{i\geq 1}S_i^2=&\sum_{i\geq 1}R_i^2+\sum_{i\geq1} T_i(2R_i+T_i) \\
\leq &0.3\cdot 0.581+0.0927\cdot (2\cdot 0.581+7.63) \\
&+2\cdot (0.1001-0.0927)(2\cdot 0.3+0.1001) \\
=&0.9996\cdots.
\end{split}
\end{equation}
The case $N(0.85)\leq 1$ is a simple analogue to this. We can repeat this argument for $\lambda_{1,1}\in [0.62,0.64]$. But when $\lambda_{1,1}\in [0.64,0.66]$, $\lambda_{1,1}\in [0.66,0.68]$ and $\lambda_{1,1}\in [0.68,H]$, we may use the fact
\begin{equation}\label{eq:MaxTiLarge}
\max_{i\geq 5}T_i
\end{equation}
has better estimate due to Lemma \ref{lem:Cases}.

In all cases, we have shown that 
\beq
 \sum_{i\geq 1}S_i^2\leq 1-c_1
\eeq for some $c_1>0$ depending only on $c_0$, which completes the proof of Theorem \ref{thm:main2}.

\end{document}